\newtheorem{theorem}{Theorem}
\theoremstyle{plain}
\newtheorem{conjecture}{Conjecture}
\newtheorem{corollary}{Corollary}
\numberwithin{equation}{section}
\begin{document}
\title[Majority choosability of digraphs]{Majority choosability of digraphs}
\author{Marcin Anholcer}
\address{Faculty of Informatics and Electronic Economy, Pozna\'{n}
	University of Economics and Business,  61-875 Pozna\'{n}, Poland}
\email{m.anholcer@ue.poznan.pl}
\author{Bart\l omiej Bosek}
\address{Theoretical Computer Science Department, Faculty of Mathematics and Computer Science, Jagiellonian
University, 30-348 Krak\'{o}w, Poland}
\email{bosek@tcs.uj.edu.pl}
\author{Jaros\l aw Grytczuk}
\address{Faculty of Mathematics and Information Science, Warsaw University
	of Technology, 00-662 Warsaw, Poland}
\email{j.grytczuk@mini.pw.edu.pl}
\dedicatory{Dedicated to Micha{\l} Karo\'{n}ski on the ocassion of his 70th birthday.}
\thanks{Research partially supported by the National Science Center of Poland, grant 2011/03/B/ST6/01367.}

\begin{abstract}
A \emph{majority coloring} of a digraph is a coloring of its vertices such that for each vertex $v$, at most half of the out-neighbors of $v$ has the same color as $v$. A digraph $D$ is \emph{majority $k$-choosable} if for any assignment of lists of colors of size $k$ to the vertices there is a majority coloring of $D$ from these lists. We prove that every digraph is majority $4$-choosable.  This gives a positive answer to a question posed recently by Kreutzer, Oum, Seymour, van der Zypen, and Wood in \cite{Kreutzer}. We obtain this result as a consequence of a more general theorem, in which majority condition is profitably extended. For instance, the theorem implies also that every digraph has a coloring from arbitrary lists of size three, in which at most $2/3$ of the out-neighbors of any vertex share its color. This solves another problem posed in \cite{Kreutzer}, and supports an intriguing conjecture stating that every digraph is majority $3$-colorable.
\end{abstract}

\maketitle

\section{Introduction}

Let $D$ be a directed graph. Let $d^{+}(v)$ denote the number of out-neighbors of vertex $v$. A coloring $c$ of the vertices of $D$ is called \emph{majority coloring} if for every vertex $v$ the number of its out-neighbors in color $c(v)$ is at most $\frac{1}{2}d^+(v)$. This concept was introduced recently by van der Zypen \cite{Zypen}, in connection to neural networks, and studied by Kreutzer, Oum, Seymour, van der Zypen, and Wood in \cite{Kreutzer}. It is proved there, among other results, that every digraph is majority $4$-colorable. The proof is very simple: first, notice that every digraph with no directed cycles is majority $2$-colorable (just apply greedy coloring), next, split the edges of a given digraph into two acyclic digraphs, and take the product coloring. It is conjectured in \cite{Kreutzer} that actually three colors are sufficient for majority coloring of any digraph. This would be best possible since a majority coloring of an odd directed cycle must be a proper coloring of the underlying undirected graph.

Another interesting problem posed in \cite{Kreutzer} concerns \emph{list} version of the majority coloring. Suppose that each vertex $v$ of a digraph $D$ is assigned with a list of colors $L(v)$. Then $D$ is majority colorable \emph{from} these lists if there is a majority coloring $c$ of $D$ with $c(v)\in L(v)$. If $D$ is majority colorable from any lists of size $k$, then we say that $D$ is \emph{majority $k$-choosable}. The authors of \cite{Kreutzer} asked if there is a finite number $k$ such that every digraph is majority $k$-choosable. We answer this question in the affirmative by proving a more general theorem which implies that actually every digraph is majority $4$-choosable. As another consequence we infer that every digraph is $3$-choosable so that at most $\frac{2}{3}d^+(v)$ of the out-neighbors of any vertex $v$ have the same color as $v$. This solves another problem posed in \cite{Kreutzer}, and extends a result of Seymour from \cite{Seymour}, asserting that every digraph has $3$-coloring in which at least one out-neighbor of each vertex (of positive out-degree) is colored differently.

There are many variants of majority coloring that may be studied in a variety of contexts (see \cite{Kreutzer}). Perhaps our approach might be useful in some of these situations. We shall discuss briefly these issues in the final section.

\section{The results}
Our main result reads as follows.

\begin{theorem}
Let $D$ be a directed graph. Suppose that each vertex $v$ is assigned with a list $L(v)$ of four colors. Suppose further that each color $x$ in $L(v)$ is assigned with a real number $r_{v}(x)$, the \emph{rank} of color $x$ in $L(v)$. Assume that for every vertex $v$, the color ranks $r_{v}(x)$ satisfy the following condition: 
\begin{equation} \label{dupa} \tag{$\ast$} \sum_{x\in L(v)}r_{v}(x) \geq 2d^{+}(v).
\end{equation}
Then there is a vertex coloring of $D$ from lists $L(v)$ satisfying the following constraint: If $x$ is a color assigned to $v$, then the number of out-neighbors of $v$ in color $x$ is at most $r_{v}(x)$.

\end{theorem}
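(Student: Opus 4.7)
The plan is to treat the acyclic case first and then the general case by a potential-function argument. When $D$ is acyclic, I would process the vertices in reverse topological order; each time we reach $v$, all of its out-neighbours have already received colours, so the counts $n_v(x) := |\{u \in N^+(v) : c(u) = x\}|$ are known for every $x \in L(v)$. The averaging
\[
\sum_{x \in L(v)} \bigl(r_v(x) - n_v(x)\bigr) \;\geq\; 2 d^+(v) - d^+(v) \;=\; d^+(v) \;\geq\; 0
\]
then produces some $x^\ast \in L(v)$ with $r_v(x^\ast) \geq n_v(x^\ast)$, which we assign as $c(v)$. Notice that this step only uses $\sum_x r_v(x) \geq d^+(v)$: the extra $d^+(v)$ in \eqref{dupa} must be the surplus that pays for directed cycles.

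For a general digraph $D$, I would look for a colouring $c$ from the lists that minimizes a suitable potential $\Phi(c)$, and then try to deduce $m_v(c) \leq r_v(c(v))$ at every $v$ from the first-order optimality together with \eqref{dupa}. The first natural candidate, $\Phi(c) = \sum_v m_v(c)$ (the total number of monochromatic arcs), implies at a minimizer the local inequalities
\[
n_v(c(v)) + e_v(c(v)) \;\leq\; n_v(x) + e_v(x) \qquad \text{for all } x \in L(v),
\]
where $e_v(y) := |\{w \in N^-(v) : c(w) = y\}|$. Averaging over $x\in L(v)$ yields only $m_v(c) \leq (d^+(v) + d^-(v))/4 - e_v(c(v))$, which is not directly comparable to $r_v(c(v))$.

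The main obstacle is therefore to find a rank-weighted potential whose minimizer combines the factor $2$ in \eqref{dupa} with the factor $1/4$ coming from $|L(v)|=4$ so as to produce exactly $m_v(c) \leq r_v(c(v))$; natural candidates are $\Phi(c) = \sum_v\bigl(m_v(c) - r_v(c(v))\bigr)$ and, assuming positive ranks, $\Phi(c) = \sum_v m_v(c)/r_v(c(v))$. The in-neighbour terms $e_v(\cdot)$ that appear in any local swap are the delicate piece, and I expect one has to invoke either a secondary tie-breaking optimization (among minimizers, maximize $\sum_v r_v(c(v))$) or a compensating swap at an in-neighbour of the bad vertex. As a back-up I would fall back on induction on $|V(D)|$: pick a vertex $v_0$, try each $x_0 \in L(v_0)$ as $c(v_0)$, decrement $r_w(x_0)$ by $1$ for every $w \in N^-(v_0)$ (both $\sum_x r_w(x)$ and $d^+(w)$ drop by $1$, preserving \eqref{dupa}), apply induction to $D - v_0$, and argue that for at least one choice of $x_0$ the constraint $m_{v_0}(c) \leq r_{v_0}(x_0)$ is also satisfied.
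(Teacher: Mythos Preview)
Your potential-function discussion is exploratory and you yourself flag that neither $\Phi$ you write down closes the argument, so the real content of the proposal is the ``back-up'' induction. That back-up is very close to what the paper does, but it has a genuine gap.

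In your scheme you fix a \emph{single} colour $x_0$, decrement $r_w(x_0)$ for each $w\in N^-(v_0)$, and apply induction. You then want to say that for at least one of the four choices of $x_0$ the resulting colouring $c^{(x_0)}$ of $D-v_0$ satisfies $n_{v_0}(x_0)\le r_{v_0}(x_0)$. But the four runs of the induction produce four \emph{different} colourings of $D-v_0$, so the counts $n_{v_0}^{(x_0)}(x_0)$ live in different worlds: there is no inequality linking $\sum_{x_0} n_{v_0}^{(x_0)}(x_0)$ to $d^+(v_0)$, and hence no pigeonhole against $\sum_{x_0} r_{v_0}(x_0)\ge 2d^+(v_0)$. (Your observation that decrementing one rank preserves $(\ast)$ is correct---the slack even grows---but that extra slack is exactly the symptom that you are not spending the factor $2$ in $(\ast)$.)

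The missing idea, and what the paper does, is to reserve \emph{two} colours for $v_0$ before recursing. Take the two colours $a,b\in L(v_0)$ of largest rank; then $r_{v_0}(a)+r_{v_0}(b)\ge \tfrac12\sum_{x}r_{v_0}(x)\ge d^+(v_0)$. For every $w\in N^-(v_0)$ decrement \emph{both} $r_w(a)$ and $r_w(b)$ (when present). Now the left side of $(\ast)$ drops by at most $2$ and the right side by exactly $2$, so $(\ast)$ survives and induction gives a \emph{single} colouring $c$ of $D-v_0$. In that one colouring $n_a+n_b\le d^+(v_0)\le r_{v_0}(a)+r_{v_0}(b)$, so at least one of $n_a\le r_{v_0}(a)$, $n_b\le r_{v_0}(b)$ holds; colour $v_0$ accordingly. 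The pre-decrement of both $a$ and $b$ guarantees the constraint at each in-neighbour whichever of the two you end up using. This is precisely where the $2$ in $(\ast)$ meets the $|L(v)|=4$: two reserved colours need two units of slack per in-neighbour.
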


\begin{proof}
Let us remark first that we do not impose any restrictions on color ranks, except condition (\ref{dupa}). These ranks may be positive, negative, or zero. If $r_v(x)=0$ and $v$ is colored with $x$, then to satisfy the assertion of the theorem, none of the  out-neighbors of $v$ may be colored with $x$. If $r_v(x)$ is strictly negative, then actually $v$ cannot be colored with $x$ at all (no set may have negative cardinality).

The proof goes by induction on the number of vertices in $D$. It is not hard to check that the theorem is true for one-vertex digraph. Indeed, by condition (\ref{dupa}), at least one color rank in the list must be non-negative, and we may use it to color the only vertex in the digraph. So, let $n\geq 2$, and assume that the assertion of the theorem is true for all digraphs with at most $n-1$ vertices. Let $D$ be a digraph on $n$ vertices satisfying the assumptions of the theorem, and let $v$ be any vertex of $D$. Consider a new digraph $D'$ obtained by deleting vertex $v$ with color ranks modified as follows. Let $a$ and $b$ be the two colors with highest ranks, $r_{v}(a)$ and $r_{v}(b)$, in the list $L(v)$. For each in-coming neighbor $u$ of vertex $v$, decrease the ranks $r_{u}(a)$ and $r_{u}(b)$ by one, provided these colors are contained in the list $L(u)$. All the remaining color ranks in these or other lists are left unchanged.

We claim that digraph $D'$ with modified color ranks still satisfies condition (\ref{dupa}). Indeed, for each in-coming neighbor $u$ of $v$, the left hand side of (\ref{dupa}) decreased by at most two, while the right-hand side of (\ref{dupa}) decreased by exactly two (since the out-degree $d^{+}(u)$ decreased by exactly one). So, by the inductive assumption there is a coloring of $D'$ satisfying the assertion of the theorem.

We now extend this coloring to the deleted vertex $v$ in the following way. First notice that
\begin{equation} \label{pipa} \tag{$1$} r_{v}(a)+r_{v}(b) \geq d^{+}(v).
\end{equation}
Indeed, by the maximality of ranks of colors $a$ and $b$ in the list $L(v)$, the inequality $r_{v}(a)+r_{v}(b) < d^{+}(v)$ would imply $\sum_{x\in L(v)}r_{v}(x) < 2d^{+}(v)$, contrary to the assumption. Let $n_{a}$ and $n_b$ denote the number of out-neighbors of $v$ colored with colors $a$ and $b$, respectively. Obviously, $n_{a}+n_{b}\leq d^{+}(v)$. Hence, by (\ref{pipa}), at least one of the following inequalities must be satisfied:
\begin{equation} \label{chuj} \tag{$2$} r_{v}(a)\geq n_{a} \quad \text{or} \quad r_{v}(b) \geq n_{b}.
\end{equation}
We chose a color whose rank satisfies one of these inequalities, and assign that color to $v$.

We claim that the extended coloring satisfies the assertion of the theorem. First, let $u$ be arbitrary in-coming neighbor of $v$. Let $x$ denote the color assigned to $u$ in coloring of $D'$. If $x$ is one of the colors $a$ or $b$, then the number of out-neighbors of $u$ in $D'$ colored with $x$ is at most $r_{u}(x)-1$, by inductive assumption. Thus, their number in $D$ after coloring the vertex $v$ is still bounded by $r_{u}(x)$. If $x$ is neither equal to $a$ nor to $b$, then the constraint is fulfilled even more. If $u$ is an arbitrary out-neighbor of $v$, or any other vertex of $D'$, then the corresponding constraint holds by induction, since out-neighborhoods and color ranks for such vertices remained unchanged in $D'$. Finally, for the vertex $v$ we have chosen color $a$ or $b$ so that the corresponding inequality of (\ref{chuj}) is satisfied. This completes the proof.

\end{proof}

We obtain now easily the aforementioned consequences for majority choosability of digraphs.

\begin{corollary}
Every digraph is majority $4$-choosable.
\end{corollary}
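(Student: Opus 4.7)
The plan is to derive the corollary by a direct application of the theorem with a uniform choice of ranks. Given any digraph $D$ and any list assignment $L$ where every list has size exactly $4$, the goal is to invoke the theorem with ranks tuned so that the per-color bound coincides with the majority threshold $\tfrac{1}{2}d^+(v)$.

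Concretely, for each vertex $v$ and each color $x \in L(v)$, I would set $r_v(x) = \tfrac{1}{2} d^+(v)$. Since $|L(v)|=4$, this gives
\[
\sum_{x \in L(v)} r_v(x) \;=\; 4 \cdot \tfrac{1}{2} d^+(v) \;=\; 2 d^+(v),
\]
so hypothesis $(\ast)$ of the theorem is satisfied (with equality). Applying the theorem yields a coloring $c$ with $c(v) \in L(v)$ for every $v$ such that the number of out-neighbors of $v$ colored with $c(v)$ is at most $r_v(c(v)) = \tfrac{1}{2} d^+(v)$. This is exactly the majority condition, so $c$ is a majority coloring of $D$ from the lists $L$.

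There is essentially no obstacle beyond verifying this translation: the only thing to note is that the theorem permits any real ranks, so the value $\tfrac{1}{2} d^+(v)$ (which may fail to be an integer when $d^+(v)$ is odd) is admissible, and the resulting integer count of monochromatic out-neighbors being at most $\tfrac{1}{2} d^+(v)$ is precisely the definition of majority coloring. Since the lists $L$ were arbitrary lists of size $4$, this shows that $D$ is majority $4$-choosable.
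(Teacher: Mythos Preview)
Your argument is correct and is exactly the paper's approach: set $r_v(x)=\tfrac12 d^+(v)$ for every $x\in L(v)$, verify $(\ast)$, and apply the theorem.
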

\begin{proof}
Put $r_{v}(x)=\frac{1}{2}d^{+}(v)$ for each vertex $v$ and for every color $x$ from its list $L(v)$, and apply the theorem.
\end{proof}

\begin{corollary}
Let $D$ be a digraph with color lists of size three assigned to the vertices. Then there is a coloring from these lists such that for each vertex $v$, at most $\frac{2}{3}$ of its out-neighbors have the color of $v$.
\end{corollary}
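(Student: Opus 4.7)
My plan is to apply Theorem~1 to a version of the instance in which each list has been padded to size four. Concretely, for every vertex $v$ I would introduce a fresh dummy color $x_v^*$ (not appearing in any other list) and form the enlarged list $L'(v) = L(v) \cup \{x_v^*\}$. On $L'(v)$ I would assign the ranks
\[
r_v(x) = \frac{2 d^+(v)}{3} + \frac{1}{6} \text{ for } x \in L(v), \qquad r_v(x_v^*) = -\frac{1}{2}.
\]
A short calculation then shows $\sum_{x \in L'(v)} r_v(x) = 2 d^+(v)$, so condition $(\ast)$ holds with equality and Theorem~1 produces a coloring $c$ of $D$ obeying the per-color cap $r_v(c(v))$.

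Next, since the number of out-neighbors of $v$ sharing color $c(v)$ is a nonnegative integer while $r_v(x_v^*)<0$, the inequality from Theorem~1 would be violated if $v$ were colored with $x_v^*$. Hence $c(v)\in L(v)$ for every $v$, and the cap on real colors becomes $\tfrac{2}{3} d^+(v) + \tfrac{1}{6}$. Because this count is an integer, a routine case analysis on $d^+(v) \bmod 3$ shows that it is in fact bounded by $\tfrac{2}{3} d^+(v)$, which is precisely the desired conclusion.

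The only delicate point will be the calibration of the ranks: I will need the dummy's rank strictly negative so that Theorem~1 is forced not to use it, yet the real ranks must be nudged just enough upward to preserve $(\ast)$ without exceeding $\tfrac{2}{3} d^+(v)$ after rounding down to integers. The specific choices $-\tfrac{1}{2}$ and $+\tfrac{1}{6}$ above are tuned precisely to meet both constraints at once; any sufficiently small positive $\varepsilon$ used to replace $\tfrac{1}{6}$ (compensated by $-3\varepsilon$ on the dummy) would work equally well.
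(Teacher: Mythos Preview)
Your proof is correct and follows essentially the same approach as the paper: pad each three-element list with a dummy color carrying a negative rank, give the three real colors rank $\tfrac{2}{3}d^{+}(v)+\varepsilon$, and invoke Theorem~1. The paper uses a single global fictitious color $f$ with rank $-3\varepsilon$ (for any $0<\varepsilon<\tfrac{1}{3}$) rather than per-vertex dummies, and leaves the integer-rounding step implicit, but these are inessential differences---indeed, your final remark about replacing $\tfrac{1}{6}$ by a small $\varepsilon$ with compensating $-3\varepsilon$ is exactly the paper's formulation.
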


\begin{proof}
Let $0<\varepsilon<\frac{1}{3}$ be a real number. Let $v$ be a vertex in $D$, and let $L(v)$ denote its list with three colors. For each color $x$ in $L(v)$ assign the rank $r_{v}(x)=\frac{2}{3}d^{+}(v)+\varepsilon$. Now, add a new fictitious color $f$ with the rank $r_{v}(f)=-3\varepsilon$ to each list $L(v)$. The assertion of the corollary follows now directly from Theorem 1.
\end{proof}

\section{Discussion}
There are many variants of majority coloring that may be studied for various combinatorial structures (see \cite{Kreutzer}). For instance, in a multi-color version considered in \cite{Kreutzer}, the majority constraint is strengthened to $\frac{1}{k}d^+(v)$, where $k\geq2$ is a fixed integer. It is easy to see that $k$ colors are sufficient for acyclic digraphs, and thus $k^2$ colors suffice for arbitrary digraph (by product coloring). It is conjectured in \cite{Kreutzer} that $k+1$ colors are actually enough. As noted by David Wood (personal communication), the proof of Theorem 1 can be easily extended to the multi-color setting, however, it only gives the same quadratic bound in the list version of the problem.

The situation looks much simpler for undirected graphs. An old result of Lov\'{a}sz \cite{Lovasz} asserts that every graph is majority $2$-colorable, and more generally, it is $k$-colorable so that at most $1/k$ neighbors of each vertex share its color, for every $k \geq2$. The proof is very simple: just take a coloring that minimizes the total number of monochromatic edges. The same argument works in the list version, and after slight modification it gives a result similar to Theorem 1 (with color ranks in each list summing up to at least the degree of the corresponding vertex).
 
Majority coloring may be studied for infinite graphs as well (see \cite{Aharoni}). For undirected graphs it is known as the problem of \emph{unfriendly partitions} (see \cite{Garden}). As proved by Shelah and Milner \cite{Shelah}, every infinite graph is majority $3$-colorable, but there are graphs on uncountably many vertices that are not majority $2$-colorable. Whether every countably infinite graph has a majority $2$-coloring remains a mystery. Perhaps it would be interesting to consider similar questions for infinite directed graphs.

We conclude the paper with the following strengthening of the majority coloring conjecture from \cite{Kreutzer}.
\begin{conjecture}
Every digraph is majority $3$-choosable.
\end{conjecture}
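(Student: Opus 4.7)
The natural plan is to prove a three-color analogue of Theorem~1. One is led to attempt the following strengthening: if each vertex $v$ has a list $L(v)$ of three colors with ranks satisfying $\sum_{x \in L(v)} r_v(x) \geq \tfrac{3}{2} d^{+}(v)$, then $D$ admits a coloring from the lists in which, for every vertex $v$ of color $x$, at most $r_v(x)$ out-neighbors of $v$ are also colored $x$. Setting $r_v(x) = \tfrac{1}{2}d^{+}(v)$ uniformly recovers majority $3$-choosability.

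I would follow the inductive blueprint of Theorem~1. Pick any vertex $v$ and let $a, b$ be the two top-ranked colors in $L(v)$. Since the smallest of the three ranks is at most the average $\tfrac{1}{2}d^{+}(v)$, we have
\[
r_v(a) + r_v(b) \;\geq\; \tfrac{3}{2}d^{+}(v) - \tfrac{1}{2}d^{+}(v) \;=\; d^{+}(v).
\]
In the digraph $D'$ obtained by deleting $v$ one decreases the ranks of $a$ and $b$ in the list of every in-neighbor of $v$ in order to apply the inductive hypothesis. Once the coloring of $D'$ is produced, the Theorem~1 extension step still works at $v$: with $n_a$ and $n_b$ counting the out-neighbors of $v$ colored $a$ and $b$, one has $n_a + n_b \leq d^{+}(v) \leq r_v(a) + r_v(b)$, so at least one of $n_a \leq r_v(a)$ or $n_b \leq r_v(b)$ holds.

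The main obstacle is that the weakened rank invariant does not survive deletion. At each in-neighbor $u$ of $v$ the required sum $\tfrac{3}{2}d^{+}(u)$ drops by $\tfrac{3}{2}$ when $d^{+}(u)$ drops by $1$, while reducing both $r_u(a)$ and $r_u(b)$ by $1$ depletes the actual sum by up to $2$. The resulting deficit of $\tfrac{1}{2}$ accumulates along $N^{-}(v)$ and breaks the induction. Closing this half-unit gap — the precise numerical difference between lists of size four and three — is exactly the combinatorial heart of the conjecture.

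A plausible way forward is to refine the bookkeeping so that only one effective unit of rank is charged to each in-neighbor of $v$. One might commit to the color of $v$ before deleting it, or maintain a more delicate invariant (fractional ranks, or pairwise rank reductions coupled through an auxiliary bipartite matching between $N^{-}(v)$ and $\{a,b\}$) so that a single in-neighbor is debited for at most one of the two reserved colors. A fundamentally different route would be probabilistic: color each vertex uniformly from its list and apply the Lovász Local Lemma to the bad events that a vertex shares its color with more than half of its out-neighbors. This approach is effective only once $d^{+}(v)$ is sufficiently large and would need to be combined with the inductive argument above to handle low out-degrees. I expect that a successful proof will weave together these two ideas, using a structural/inductive reduction to the high-out-degree case and a randomized finishing step there.
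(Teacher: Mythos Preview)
This statement is presented in the paper as a \emph{conjecture}, not a theorem: the paper offers no proof, and indeed the problem remains open. There is therefore no paper proof to compare your proposal against.

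Your proposal is not a proof either, and to your credit you say so explicitly. Your analysis of why the direct three-color analogue of Theorem~1 fails is correct: when you delete $v$ and decrement the ranks of both reserved colors $a,b$ at each in-neighbor $u$, the left-hand side of the hoped-for invariant $\sum_{x\in L(u)} r_u(x)\geq \tfrac{3}{2}d^{+}(u)$ can drop by $2$ while the right-hand side drops by only $\tfrac{3}{2}$, so the induction does not close. (One small wording slip: the line ``the smallest of the three ranks is at most the average $\tfrac{1}{2}d^{+}(v)$'' is not literally right, since the average is \emph{at least} $\tfrac{1}{2}d^{+}(v)$; the clean derivation is $r_v(a)+r_v(b)\geq \tfrac{2}{3}\sum_{x\in L(v)} r_v(x)\geq d^{+}(v)$.)

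The speculative repairs you sketch---committing to the color of $v$ before deletion, charging only one in-neighbor per reserved color via a matching, or a Local Lemma argument in the high out-degree regime---are reasonable avenues but none is known to work. In short: your diagnosis of the obstacle matches the state of the art, and the conjecture stands.
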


\end{document}